\newtheorem{thm}{Theorem}
\newenvironment{thm'}[1]{%
	\manualtheoreminner
}{\endmanualtheoreminner}
\newtheorem{lem}[thm]{Lemma}
\newtheorem{prop}[thm]{Proposition}
\newtheorem{conj}[thm]{Conjecture}
\newtheorem{obs}[thm]{Observation}
\theoremstyle{remark}
\def\cC{\mathcal{C}}
\def\blfootnote{\gdef\@thefnmark{}\@footnotetext}
\begin{document}

\title{On the number of tangencies among $1$-intersecting $x$-monotone curves}
\author{
Eyal Ackerman\thanks{Department of Mathematics, Physics and Computer Science,
	University of Haifa at Oranim, 	Tivon 36006, Israel.}\and
Bal\'azs Keszegh\thanks{Alfréd Rényi Institute of Mathematics and ELTE Eötvös Loránd University, Budapest, Hungary. 
	Research supported by the J\'anos Bolyai Research Scholarship of the Hungarian Academy of Sciences, by the National Research, Development and Innovation Office -- NKFIH under the grant K 132696 and FK 132060, by the \'UNKP-22-5 New National Excellence Program of the Ministry for Innovation and Technology from the source of the National Research, Development and Innovation Fund and by the ERC Advanced Grant ``ERMiD''. This research has been implemented with the support provided by the Ministry of Innovation and Technology of Hungary from the National Research, Development and Innovation Fund, financed under the  ELTE TKP 2021-NKTA-62 funding scheme.}
}
\maketitle

\begin{abstract}
Let $\cC$ be a set of curves in the plane such that no three curves in $\cC$ intersect at a single point and every pair of curves in $\cC$ intersect at exactly one point which is either a crossing or a touching point.
J\'anos Pach conjectured that the number of pairs of curves in $\cC$ that touch each other is $O(|\cC|)$. We prove this conjecture for $x$-monotone curves.
\end{abstract}

\section{Introduction}

We study the number of tangencies within a family of $1$-intersecting $x$-monotone planar curves.
A planar curve is a \emph{Jordan arc}, that is, the image of an injective continuous function from a closed interval into $\mathbb{R}^2$. 
If no two points on a curve have the same $x$-coordinate, then the curve is \emph{$x$-monotone}.
We consider families of curves such that every pair of curves intersect at a finite number of points.
Such a family is called $t$-intersecting if every pair of curves intersects at at most $t$ points.
An intersection point $p$ of two curves is a \emph{crossing point} if there is a small disk $D$ centered at $p$ which contains no other intersection point of these curves, each curve intersects the boundary of $D$ at exactly two points and in the cyclic order of these four points no two consecutive points belong to the same curve.
If two curves intersect at exactly one point which is not a crossing point, then we say that they are \emph{touching} or \emph{tangent} at that point.

The number of \emph{tangencies} is the number of tangent pairs of curves.
If more than two curves are allowed to intersect at a common point, then every pair of curves might be tangent, e.g., for the graphs of the functions $x^{2i}$, $i=1,2,\ldots,n$, in the interval $[-1,1]$.
Therefore, we restrict our attention to families of curves in which no three curves intersect at a common point.
It is not hard to construct such a family of $n$ ($x$-monotone) $1$-intersecting curves with $\Omega(n^{4/3})$ tangencies based on a famous construction of Erd\H{o}s (see~\cite{pachbook}) of $n$ lines and $n$ points admitting that many point-line incidences (for an illustration consider Figure~\ref{fig:many-tangency-points} and trim every curve beyond its rightmost touching point).
J\'anos Pach~\cite{pachpc} conjectured that requiring every pair of curves to intersect (either at crossing or a tangency point) leads to significantly less tangencies.
 
\begin{conj}[\cite{pachpc}]\label{conj:Pach}
Let $\cC$ be a set of $n$ curves such that no three curves in $\cC$ intersect at a single point and every pair of curves in $\cC$ intersect at exactly one point which is either a crossing or a tangency point.
Then the number of tangencies among the curves in $\cC$ is $O(n)$.
\end{conj}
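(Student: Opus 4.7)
The plan is to sweep the arrangement from left to right, maintaining the vertical order of currently active curves, and to charge each tangency to a combinatorial event. The key local invariant is that at a tangency of two curves $a$ and $b$, with $a$ locally below $b$, the pair $a,b$ is vertically adjacent at that $x$-coordinate; moreover, because the pair meets exactly once and a tangency does not swap them, $a$ stays below $b$ throughout the interval in which both curves are active. Thus every tangency lies inside a maximal ``adjacency interval'' of its pair, and we can charge the tangency to the event that begins this interval.

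There are five possible start-of-adjacency events: $a$ is inserted directly below $b$ at its left endpoint; $b$ is inserted directly above $a$; a curve $c$ that was the unique curve between $a$ and $b$ reaches its right endpoint; or one of two symmetric crossing events in which the unique intermediate curve $c$ is swapped to the outside via a crossing with $a$ or with $b$. The first three types contribute at most $O(n)$ certificates in total, since each curve is inserted once and deleted once, and each such event creates at most two new adjacencies. The pair $\{a,b\}$ produced in each case is uniquely determined by the event, so the charging is injective on these types.

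The main obstacle --- the heart of the proof --- is to bound the number of tangencies whose certificate is one of the two crossing events. A naive count gives only $O(n^2)$, since there can be $\binom{n}{2}$ crossings each creating up to two new adjacencies. To reach $O(n)$ one must use the full force of the global one-intersection assumption: the crossing $\{a,c\}$ that serves as a certificate for the tangency $\{a,b\}$ forces $c$ and $b$ to meet at a unique point as well, and this coupling between the charging crossing and the forced intersection of $c$ with $b$ is, I expect, what ultimately rules out an $\Omega(n^{4/3})$-type construction. The technical work is then either to iterate the charging along these forced intersections until one arrives at an $O(n)$ set of fundamental certificates, or to recognize the crossing-certificate configurations as forbidden patterns in a Davenport--Schinzel-style sequence argument on the sweep. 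Pinning down the correct invariant to iterate on, and verifying that the iteration terminates after $O(n)$ total charges rather than cycling among crossings, is what I anticipate as the main difficulty.
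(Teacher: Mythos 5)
This is not a proof; it is a reduction of the problem to its hardest part, which you then explicitly leave open. Your charging scheme (each tangency lives in a maximal adjacency interval of its pair during the sweep, so charge it to the event that opens that interval) is sound as far as it goes, and the insertion/deletion events indeed contribute only $O(n)$. But the entire difficulty of the conjecture is concentrated in the crossing-initiated adjacencies, where your framework gives only the trivial $O(n^2)$ bound, and your text for that case consists of a statement of intent (``iterate the charging along these forced intersections'' or ``a Davenport--Schinzel-style argument'') rather than an argument. You name no invariant, no forbidden pattern, and no termination measure, so there is nothing to verify. Note also that the statement you were given is the general conjecture for arbitrary Jordan arcs, for which a left-to-right sweep maintaining a vertical order of active curves is not even well defined (a non-$x$-monotone curve meets a vertical line in many points); your setup silently assumes $x$-monotonicity, which is exactly the special case the paper actually resolves, while the general conjecture remains open there too.

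For comparison, the paper's proof of the $x$-monotone case does not use a sweep at all. It exploits the fact that pairwise intersecting $x$-monotone curves are all stabbed by one vertical line $\ell$, splits tangencies into four types according to whether the $x$-projections of the two curves are nested or overlapping and which curve is on top, and two-colors the curves so that every tangency is a blue curve touching a red curve from below. In the nested case it shows (after discarding one marked tangency per red curve) that the bipartite tangency graph is a forest; in the overlapping case it uses Mirsky's theorem to pass to pairwise-crossing blue curves and then shows the tangency graph, with edges ordered left to right by tangency point, has no monotone increasing path of $7$ edges, which bounds the edge count linearly by a lemma of R\"odl. If you want to salvage your approach, the place to look is precisely the structure the paper extracts: which configurations of a crossing certificate together with the forced third intersection are actually realizable under the one-intersection hypothesis. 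As written, your proposal establishes nothing beyond the trivial bound.
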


Gy\"orgyi, Hujter and Kisfaludi-Bak~\cite{GYORGYI201829} proved Conjecture~\ref{conj:Pach} for the special case where there are constantly many faces in the arrangement of $\cC$ that together contain all the endpoints of the curves.
In this paper we show that Conjecture~\ref{conj:Pach} also holds for $x$-monotone curves.

\begin{thm}\label{thm:pw-x-monotone-1-intersecting}
Let $\cC$ be a set of $n$ $x$-monotone curves such that no three curves in $\cC$ intersect at a single point and every pair of curves in $\cC$ intersect at exactly one point which is either a crossing or a tangency point.
Then the number of tangencies among the curves in $\cC$ is at most $1160 n$.
\end{thm}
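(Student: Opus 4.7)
The plan is to exploit the vertical order induced by $x$-monotonicity and reduce the bound to a planar/combinatorial count. I begin with a key local observation: at a tangency $p$ between $\gamma_i$ and $\gamma_j$, say with $\gamma_i$ locally above $\gamma_j$, no third curve $\gamma_k$ can pass between them in an $x$-neighborhood of $p$. Indeed, continuity would force $\gamma_k(x_p)$ into the single value $\gamma_i(x_p)=\gamma_j(x_p)$, giving three concurrent curves and contradicting the hypothesis. Thus a tangent pair is \emph{vertically adjacent} near its tangency, and each tangency corresponds to a maximal $x$-interval during which the two curves are consecutive in the vertical order of the arrangement.

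Next I would set up an induction on $n$ by selecting an extremal curve $\gamma$ whose left endpoint has the largest $x$-coordinate $x^{*}$, refined (among such candidates) to sit as low as possible at $x^{*}+\epsilon$. Then every other curve is defined at $x^{*}+\epsilon$, and any curve tangent to $\gamma$ must stay on a single side of $\gamma$ throughout their common $x$-range. Using the unique-intersection hypothesis, I would establish a monotonicity property: for tangencies of $\gamma$ from above, the $x$-order of tangency points along $\gamma$ coincides with the vertical order of the corresponding tangent curves at $x^{*}+\epsilon$; symmetrically, for tangencies from below the order is reversed. This monotone matching between tangency positions and ``entry heights'' is the main structural ingredient.

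With the matching in hand, I would try to prove that the \emph{tangency graph} (vertices $=$ curves, edges $=$ tangent pairs) is planar: route the edge for each tangency as a short arc through its pinch point, attached to the two incident curve-vertices along the fan-like corridor dictated by the monotone matching; Euler's formula would then give $|E|\le 3n-6$. The main obstacle I foresee is that the extremal curve $\gamma$ can itself carry $\Omega(n)$ tangencies (stackings of parabolas already achieve this), so neither ``remove $\gamma$ and induct'' nor a naive one-to-one charging to endpoints of $\gamma$ can work. The heart of the argument must therefore use the monotone matching to show that these $\Omega(n)$ tangencies can be drawn in a \emph{nested} planar fashion — equivalently, in a charging formulation, each tangency on $\gamma$ can be charged to a distinct vertical-adjacency slot among the $n-1$ other curves at $x^{*}+\epsilon$, closing the induction with a linear bound.
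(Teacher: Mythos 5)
Your opening observation---that the two curves of a tangent pair are vertically adjacent in the arrangement near their tangency---is correct, but the argument you build on it has gaps, two of which are fatal. First, the target of your main step, namely that the tangency graph is planar and hence has at most $3n-6$ edges, is false: there exist $n$ $x$-monotone pairwise $1$-intersecting curves with $3n-4$ tangencies (the paper exhibits exactly such a construction as its lower bound), and $3n-4>3n-6$, so the tangency graph cannot in general be planar. Second, the ``monotonicity property'' you rely on is not true as stated. Take $\gamma$ with the rightmost left endpoint and two curves $c_1,c_2$ touching it from above, with $c_1$ below $c_2$ at $x^{*}+\epsilon$: nothing prevents $c_2$ from crossing $c_1$ once near the left, touching $\gamma$ early, and terminating before $c_1$'s much later tangency, so the $x$-order of the tangency points need not agree with the vertical order at $x^{*}+\epsilon$. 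The obstruction is precisely the difference between tangent pairs whose $x$-projections are nested and those that merely overlap; any correct argument must separate these cases, and your sketch does not.

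Even the fallback you describe---charging each tangency on the extremal curve $\gamma$ to a distinct vertical-adjacency slot among the other $n-1$ curves---only bounds the tangencies involving $\gamma$ by $n-1$; removing $\gamma$ and inducting then gives $O(n^2)$, not $O(n)$. The paper proceeds quite differently: it classifies tangencies into four types according to the relative order of the four endpoints of the touching pair, takes a vertical line $\ell$ meeting all curves (here pairwise intersection is used crucially), two-colours the curves so that at every tangency of the type under consideration a blue curve touches a red curve from below (possible because, by a short argument, each order $\prec_i$ has no chain of length three, so Mirsky's theorem applies), and then shows that for the nested types the bipartite tangency graph is a forest, while for the overlapping types it contains no monotone increasing path of $7$ edges with respect to the left-to-right order of tangency points, which bounds its number of edges linearly by a lemma attributed to R\"odl. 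Your proposal contains none of these ingredients and, as written, does not yield a linear bound.
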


We prove Theorem~\ref{thm:pw-x-monotone-1-intersecting} by considering two types of tangencies according to whether a tangency point is between two curves such that their projections on the $x$-axis are nested (i.e., one of them is a subset of the other) or non-nested. In each case we consider the tangencies graph whose vertices represent the curves and whose edges represent tangent pairs of curves. In the latter case we show that it is possible to disregard some constant proportion of the edges using the pigeonhole principle and the dual of Dilworth's Theorem and then order the remaining edges such that there is no long monotone increasing path with respect to this order. 
In the first case, we show that after disregarding some constant proportion of the edges the remaining edges induce a forest.

\paragraph{Related Work.} It follows from a result of Pach and Sharir~\cite{PS91} that $n$ $x$-monotone $1$-intersecting curves admit $O\left(n^{4/3}\left(\log n\right)^{2/3}\right)$ tangencies. Note that this bound almost matches the lower bound mentioned above.
It also follows from~\cite{PS91} that for bi-infinite $x$-monotone $1$-intersecting curves the maximum number of tangencies is $\Theta(n \log n)$.
P\'alv\"olgyi et nos~\cite{AKP23} showed that there are $O(n)$ tangencies among families of $n$ $1$-intersecting curves that can be partitioned into two sets such that all the curves within each set are pairwise disjoint.
Variations of this bipartite setting were also studied in~\cite{Ackerman2013,KP21,Treml}.

Pach, Rubin and Tardos~\cite{PRT16,PRT18} settled a long-standing conjecture of Richter and Thomassen~\cite{RT95} concerning the number of crossing points determined by pairwise intersecting curves.
In particular, they showed that in any set of curves admitting linearly many tangencies the number of crossing points is superlinear with respect to the number of tangencies.
This implies that for any fixed $t$ every set of $n$ $t$-intersecting curves admits $o(n^2)$ tangencies.
Salazar~\cite{S99} already pointed that out for such families which are also pairwise intersecting.
Better bounds for families of $t$-intersecting curves were found in~\cite{maya,KP21}.
Specifically, it follows from~\cite{KP21} that $n$ $1$-intersecting curves determine $O(n^{7/4})$ tangencies.

In many cases one can locally perturb curves such that tangencies become digons (faces of size two in the arrangement of the curves) and vice versa, although
one has to be careful since this might lead to more intersection points or a different notion of tangency (e.g., two curves may form many digons but can they touch in more than one point?).
There are quite a few problems in combinatorial geometry that can be phrased in terms of bounding the number of tangencies (or digons) among certain curves, see, e.g.,~\cite{agarwal}.
The most famous of which is the unit distance problem of Erd\H{o}s~\cite{erdos} which asks for the maximum number of unit distances among $n$ points in the plane. 
It is easy to see that this problem is equivalent to asking for the maximum number of tangencies among $n$ unit circles.

\section{Proof of Theorem~\ref{thm:pw-x-monotone-1-intersecting}}

Let $\cC$ be a set of $n$ $x$-monotone curves such that no three curves in $\cC$ intersect at a single point and every pair of curves in $\cC$ intersect at exactly one point which is either a crossing or a tangency point.
By slightly extending the curves if needed, we may assume that every intersection point of two curves is an interior point of both of them and that all the endpoints of the curves are distinct.

Let $p=(x_1,y_1)$ and $q=(x_2,y_2)$ be two points. We write $p <_x q$ if $x_1<x_2$ and we write $p <_y q$ if $y_1 < y_2$.
We mainly consider the order of points from left to right, so when we use terms like `before', `after' and `between' they should be understood in this sense.
For a curve $c\in \cC$ we denote by $L(c)$ and $R(c)$ the left and right endpoints of $c$, respectively.
If $p,q \in c$, then $c(p,q)$ denotes the part of $c$ between these two points.
We denote by $c(-,p)$ (resp., $c(p,+)$) the part of $c$ between $L(c)$ (resp., $R(c)$) and $p$.
For another curve $c' \in \cC$ we denote by $I(c,c')$ the intersection point of $c$ and $c'$.
We may also write, e.g., $c(c',q)$ instead of $c(I(c,c'),q)$

Suppose that an $x$-monotone curve $c_1$ lies \emph{above} another $x$-monotone curve $c_2$, that is, the two curves are non-crossing (but might be touching) and there is no vertical line $\ell$ such that $I(c_1,\ell) <_y I(c_2,\ell)$.
Assuming the endpoints of $c_1$ and $c_2$ are distinct there are four possible cases: (1)~$L(c_1) <_x L(c_2) <_x R(c_2) <_x R(c_1)$; (2)~$L(c_2) <_x L(c_1) <_x R(c_1) <_x R(c_2)$; (3)~$L(c_1) <_x L(c_2) <_x R(c_1) <_x R(c_2)$; and (4)~$L(c_2) <_x L(c_1) <_x R(c_2) <_x R(c_1)$.
We denote by $c_2 \prec_i c_1$ the relation that corresponds to case~$i$, for $i=1,2,3,4$.
It is not hard to see that each $\prec_i$ is a partial order.

\begin{prop}\label{prop:poset}
For every $i=1,2,3,4$ there are no three curves $c_1,c_2,c_3 \in \cC$ such that $c_1 \prec_i c_2 \prec_i c_3$.
\end{prop}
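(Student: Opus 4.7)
My plan is to derive, in each of the four cases, a point through which all three curves $c_1, c_2, c_3$ pass, contradicting the hypothesis that no three curves in $\cC$ share a common point.

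Fix $i \in \{1,2,3,4\}$ and suppose for contradiction that $c_1 \prec_i c_2 \prec_i c_3$. Unpacking the definitions, $c_2$ lies above $c_1$, $c_3$ lies above $c_2$, and the $x$-coordinates of the six endpoints interleave according to case~$i$. Let $J = [L(c_1), R(c_1)] \cap [L(c_2), R(c_2)] \cap [L(c_3), R(c_3)]$ be the common $x$-interval on which all three curves are defined. A short inspection of the endpoint orderings shows that in every case $J$ coincides with the overlap $[L(c_1), R(c_1)] \cap [L(c_3), R(c_3)]$ of the $x$-projections of $c_1$ and $c_3$, and whenever $J$ is nonempty we have $J \subseteq [L(c_2), R(c_2)]$. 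In cases~1 and~2 the projections are nested and $J$ is automatically nonempty; in cases~3 and~4, if $J$ were empty then $c_1$ and $c_3$ would have disjoint $x$-projections and hence no intersection point at all, contradicting the hypothesis on $\cC$.

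On $J$ all three curves are defined, and since $c_3$ is above $c_2$ and $c_2$ is above $c_1$ (with respective common $x$-domains containing $J$), we have $c_1(x) \leq c_2(x) \leq c_3(x)$ for every $x \in J$. Because $c_1$ and $c_3$ must intersect at exactly one point and their $x$-projections meet precisely on $J$, their unique intersection $r = (x_r, y_r)$ lies over $J$. Moreover, since $c_1 \leq c_3$ throughout $J$, the curves do not cross at $r$, so $r$ is a tangency and $c_1(x_r) = y_r = c_3(x_r)$. Now $x_r \in J \subseteq [L(c_2), R(c_2)]$, so $c_2(x_r)$ is defined and satisfies $y_r = c_1(x_r) \leq c_2(x_r) \leq c_3(x_r) = y_r$, forcing $c_2(x_r) = y_r$. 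Hence $c_1$, $c_2$ and $c_3$ all pass through $r$, the desired contradiction.

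The step that requires the most care is the endpoint bookkeeping in cases~3 and~4: one must verify that the six endpoints of $c_1, c_2, c_3$ interleave so that, whenever $c_1$ and $c_3$ share $x$-values at all, that common range sits inside the $x$-projection of $c_2$. Once this is established, the remainder of the argument is uniform across the four cases, driven only by the vertical order $c_1 \leq c_2 \leq c_3$ on $J$ and the fact that the unique tangency of $c_1$ and $c_3$ has its $x$-coordinate in $J$.
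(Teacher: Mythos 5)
Your proof is correct, and its geometric core --- that over the common $x$-interval the vertical order $c_1 \le c_2 \le c_3$ sandwiches $c_2$ between the outer two curves --- is the same idea behind the paper's one-line proof, which simply asserts that $c_1$ and $c_3$ cannot intersect and lets that contradict the pairwise-intersection hypothesis. You land the contradiction differently: instead of ruling out any intersection of $c_1$ and $c_3$, you show that if they do meet then the squeeze forces $c_2$ through the very same point, violating the assumption that no three curves of $\cC$ are concurrent (and you separately dispose of the subcase where the $x$-projections of $c_1$ and $c_3$ are disjoint, where the pairwise-intersection hypothesis is the one violated). This is in fact slightly more careful than the paper's phrasing, since ``$c_1$ and $c_3$ do not intersect'' is, read literally, only true once the degenerate triple tangency you identify has been excluded; your version makes that case explicit. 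The endpoint bookkeeping you flag as the delicate step checks out in all four cases: the overlap of the $x$-projections of $c_1$ and $c_3$ is always contained in that of $c_2$.
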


\begin{proof}
It is easy to see that if $c_1 \prec_i c_2 \prec_i c_3$ then $c_1$ and $c_3$ do not intersect.
See Figure~\ref{fig:c1-c2-c3} for an illustration of the case $i=4$.
\begin{figure}
	\centering
	\includegraphics[width= 6cm]{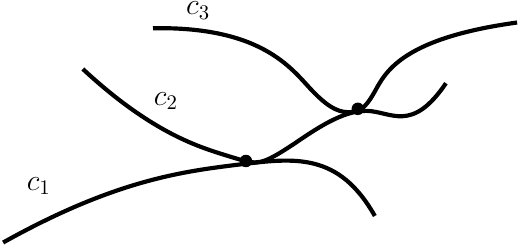}
	\caption{If $c_1 \prec_4 c_2 \prec_4 c_3$ then $c_1$ and $c_3$ do not intersect.}
	\label{fig:c1-c2-c3}		
\end{figure}
\end{proof}

We say that the tangency point of two touching curves $c_1,c_2 \in \cC$ is of \emph{Type} $i$ if $c_1 \prec_i c_2$ (see Figure~\ref{fig:types}).
\begin{figure}
	\centering
	\includegraphics[width= 12cm]{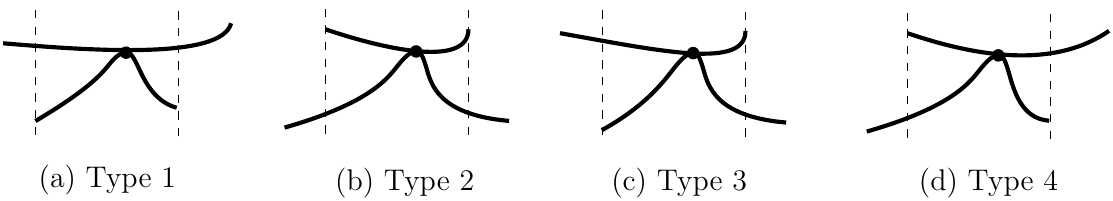}
	\caption{Types of tangency points.}
	\label{fig:types}		
\end{figure}
We will count separately tangency points of Types 1 and~2 and tangency points of Types~3 and~4.

\subsection{Bounding touching pairs of Type~1 or~2}

We first describe the main idea before going into details:
By symmetry it is enough to consider just tangency points of Type~2 which are to the right of a vertical line that intersects all the curves in $\cC$.
We will prove that there are linearly many such tangencies by showing that the tangencies graph that corresponds to such tangencies is a forest, if one ignores the rightmost touching point on every curve that touches another curve from above.

\begin{lem}\label{lem:nested}
There are at most $8n-4$ tangency points of Type~1 or~2.
\end{lem}

\begin{proof}
Since all the curves in $\cC$ are pairwise intersecting and $x$-monotone there is a vertical line $\ell$ that intersects all of them.
By slightly shifting $\ell$ if needed, we may assume that no two curves intersect $\ell$ at the same point.
We assume without loss of generality that at least half of all the tangency points of Types~1 and~2 are to the right of $\ell$, for otherwise we may reflect all the curves about $\ell$.
We may further assume that at least half of the tangency points of Types~1 and~2 to the right of $\ell$ are of Type~2, for otherwise we may reflect all the curves about the $x$-axis.
Henceforth, we consider only Type~2 tangency points to the right of $\ell$.

By Proposition~\ref{prop:poset} a curve cannot touch one curve from above and another curve from below at Type~2 tangency points. Thus, we may partition the curves into \emph{blue} curves and \emph{red} curves such that at every tangency point a blue curve touches a red curve from below (we ignore curves that contain no tangency points among the ones that we consider).

\begin{prop}\label{prop:blue-cross}
Every pair of blue curves cross each other.
\end{prop}

\begin{proof}
Suppose that a blue curve $b_1$ touches another blue curve $b_2$ from below (the tangency point may be of any type).
Since $b_2$ is a blue curve there is a red curve $r$ which it touches from below. 
Since the tangency point of $b_2$ and $r$ is of Type~2 it follows that $b_1$ and $r$ do not intersect.
\end{proof}

\begin{prop}\label{prop:blue-cross-between}
Let $r$ be a red curve and let $b_1$ and $b_2$ be two blue curves that touch $r$,
such that $I(r,b_1) <_x I(r,b_2)$.
Then $I(r,b_1) <_x I(b_1,b_2) <_x I(r,b_2)$.
\end{prop}

\begin{proof}
Since $L(b_i) <_x L(r) <_x R(r) <_x R(b_i)$, for $i=1,2$, it is easy to see that the blue curves cross at a point between their tangency points with $r$, see Figure~\ref{fig:blues-cross}.
\begin{figure}
	\centering
	\includegraphics[width= 8cm]{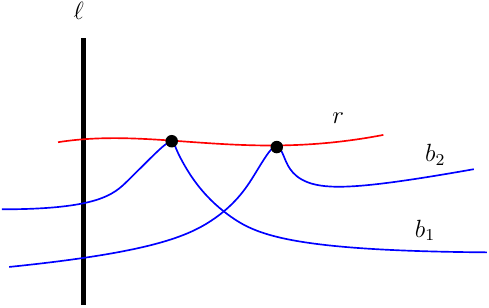}
	\caption{An illustration for the proof of Proposition~\ref{prop:blue-cross-between}: If two blue curves touch the same red curve, then they cross at a point between these two tangency points.}
	\label{fig:blues-cross}		
\end{figure}
\end{proof}

\begin{prop}\label{prop:b-above}
Let $b_1$ and $b_2$ be two blue curves both touching a red curve $r$.
Let $b$ be another blue curve such that $I(b_1,b_2)$ is between $I(b_1,b)$ and $I(b_2,b)$. Then $I(b_1,b_2)$ is below $b$.
\end{prop}

\begin{proof}
Observe that $r$ lies above the upper envelope of $b_1$ and $b_2$. 
Furthermore, since we consider Type~2 tangencies $L(b_i) <_x L(r) <_x R(r) <_x R(b_i)$, for $i=1,2$.
Therefore if $I(b_1,b_2)$ is above $b$ then $r$ and $b$ do not intersect, see Figure~\ref{fig:b-above}.
	\begin{figure}
	\centering
	\subfloat[]{\includegraphics[width= 7.5cm]{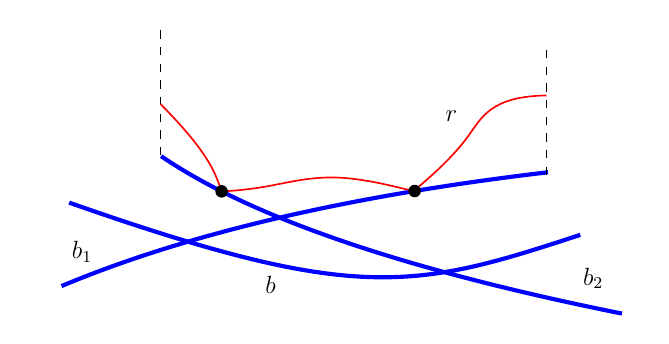}}
	\subfloat[]{\includegraphics[width= 7.5cm]{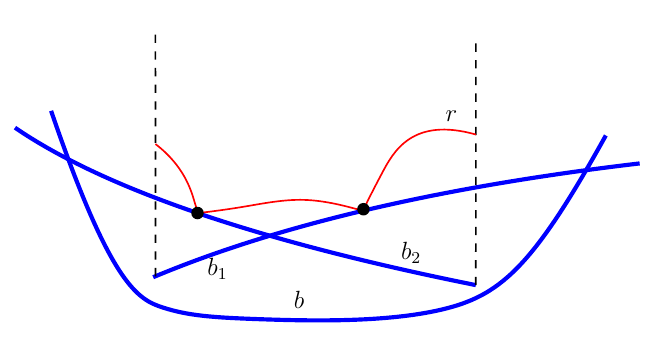}}
	\caption{An illustration for the proof of Proposition~\ref{prop:b-above}. If $I(b_1,b_2)$ is above $b$, then $r$ and $b$ do not intersect.}
	\label{fig:b-above}		
\end{figure}
\end{proof}

We proceed by marking the rightmost Type~2 tangency point on every red curve.
Clearly, at most $n$ tangency points are marked.
Henceforth, we consider only unmarked tangency points.

\begin{prop}\label{prop:red-cross-between}
Let $b$ be a blue curve and let $r_1$ and $r_2$ be two red curves that touch $b$,
such that $I(b,r_1) <_x I(b,r_2)$ (and both of these tangency points are unmarked).
Then $I(b,r_1) <_x I(r_1,r_2) <_x I(b,r_2)$.
\end{prop}

\begin{proof}
If $I(r_1,r_2) >_x I(b,r_2)$ then $r_2$ must intersect $r_1(b,r_2)$ since it intersects $\ell$, see Figure~\ref{fig:reds-cross2}.
\begin{figure}[t]
	\centering
	\subfloat[If $I(r_1,r_2) >_x I(b,r_2)$ then $r_2$ must also intersect $r_1(b,r_2)$.]{\includegraphics[width= 7cm]{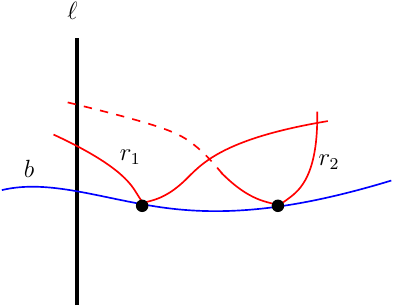}\label{fig:reds-cross2}}
	\hspace{5mm}
	\subfloat[If $I(r_1,r_2) <_x I(b,r_1$) then $b'$ and $r_1$ do not intersect.]{\includegraphics[width= 7cm]{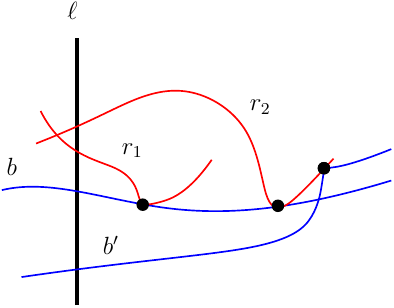}\label{fig:reds-cross}}
	\caption{Illustrations for the proof of Proposition~\ref{prop:red-cross-between}: if $r_1$ and $r_2$ touch $b$ then $I(r_1,r_2)$ is between these two tangency points.}
	\label{fig:red-cross-between}		
\end{figure}
Suppose now that $I(r_1,r_2) <_x I(b,r_1)$.
This implies that $R(r_1) <_x I(b,r_2)$ for otherwise $r_1$ and $r_2$ intersect also to the right of $I(b,r_1)$.
Since $I(b,r_2)$ is not the rightmost tangency point on $r_2$, there is a blue curve $b'$ that touches $r_2$ to the right of $I(b_,r_2)$ at a Type~2 tangency point.
However, it follows from Proposition~\ref{prop:blue-cross-between} that $I(b,r_2) <_x I(b,b') <_x I(b',r_2)$ which implies that $b'$ lies below $b$ to the left of $I(b,b')$.
Therefore, $b'$ and $r_1$ do not intersect (recall that $L(b) <_x L(r_1)$ and see Figure~\ref{fig:reds-cross}).
\end{proof}

Let $G$ be the (bipartite) \emph{tangencies graph} of the blue and red curves where edges correspond to unmarked tangency points between a red and a blue curve of Type~2 to the right of $\ell$.
We will show that $G$ is a forest and hence has at most $n-1$ edges.

\begin{prop}\label{prop:b_0-b_2}
Let $b_0-r_0-b_1-r_1-b_2$ be a path in $G$ such that $b_0$, $b_1$ and $b_2$ correspond to distinct blue curves and $r_0$ and $r_1$ correspond to distinct red curves. If $I(b_1,\ell) <_y I(b_0,\ell) <_y I(b_2,\ell)$, then
$b_0$ and $b_2$ intersect to the left of $\ell$.
\end{prop}

\begin{proof}
It follows from Proposition~\ref{prop:blue-cross-between}
that $I(b_0,b_1)$ and $I(b_1,b_2)$ are to the right of $\ell$.
Suppose for contradiction that $I(b_0,b_2)$ is to the right of $\ell$.
Recall that $b_2$ intersects $\ell$ above $b_0$ and $b_0$ intersects $\ell$ above $b_1$.
If, going from left to right, $b_2$ intersects first $b_1$ (necessarily to the right of $I(b_0,b_1)$) and then it intersects $b_0$, then $I(b_1,b_2)$ is above $b_0$ which contradicts Proposition~\ref{prop:b-above} (see Figure~\ref{fig:b_0-b_2-a}).
\begin{figure}[t]
	\centering
	\subfloat[$b_2$ intersects $b_1$ and then $b_0$.]{\includegraphics[width= 7cm]{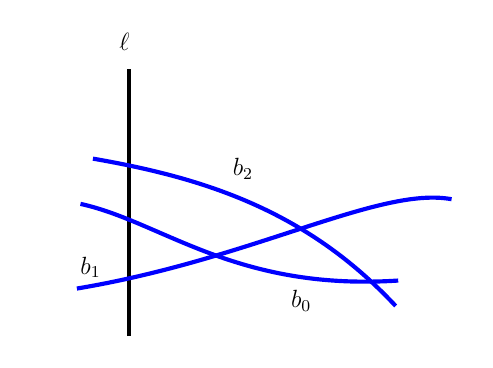}\label{fig:b_0-b_2-a}}
	\hspace{5mm}
	\subfloat[If $I(b_1,r_1) \in b_1(I(b_1,b_2),I(b_1,b_0))$ then $r_1$ must cross $b_0$ twice.]{\includegraphics[width= 7cm]{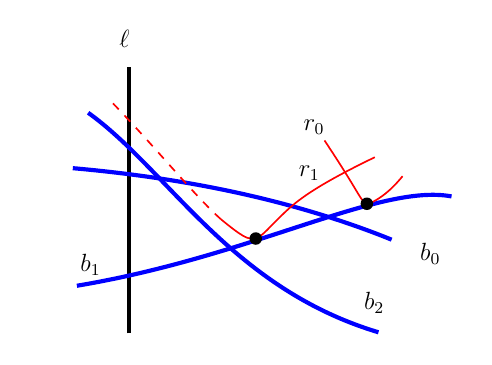}\label{fig:b_0-b_2-b}}
	\hspace{5mm}
	\subfloat[If $I(b_1,r_1) \in b_1(I(b_1,b_0),+)$, then $r_1$ touches $b_0$. If $I(b_1,r_0) <_x I(b_1,r_1)$, then $r_0$ must intersect $r_1$ twice or crosses $b_0$ to be able to intersect $\ell$.]{\includegraphics[width= 7cm]{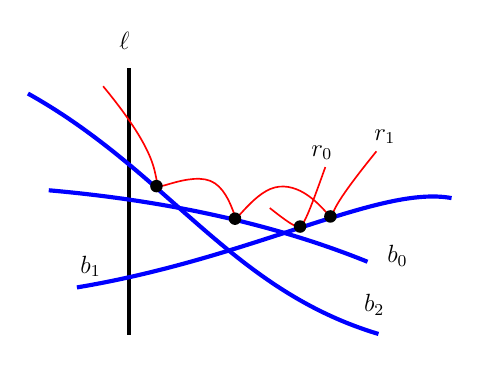}\label{fig:b_0-b_2-c}}
	\hspace{5mm}
	\subfloat[If $I(b_1,r_1) <_x I(b_1,r_0)$, then $r_0$ does not intersect $b_0$ to the right of $\ell$.]{\includegraphics[width= 7cm]{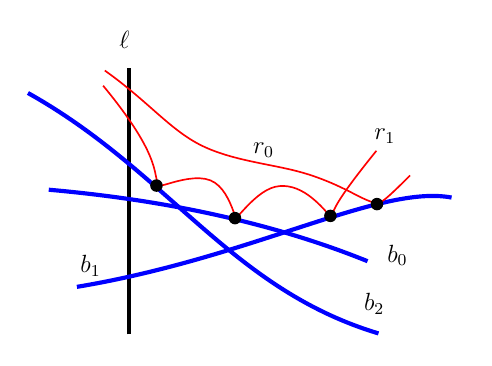}\label{fig:b_0-b_2-d}}
	\caption{Illustrations for the proof of Proposition~\ref{prop:b_0-b_2}. $I(b_0,b_2)$ is to the right of $\ell$.}
	\label{fig:b_0-b_2}		
\end{figure}

Therefore, $b_2$ intersects $b_0$ and then $b_1$ which implies that $b_1$ intersects $b_2$ and then $b_0$.
The curve $r_1$ lies above the upper envelope of $b_1$ and $b_2$, therefore it may touch $b_1$ at a point which is either in $b_1(b_2,b_0)$ or in $b_1(b_0,+)$.
Consider the first case.
It follows from Proposition~\ref{prop:red-cross-between} that $I(r_0,r_1)$ is between $I(b_1,r_1)$ and $I(b_1,r_0)$ and therefore $r_1$ must cross $b_0$ after it touches $b_1$, since $r_0$ is above the upper envelope of $b_0$ and $b_1$.
However, in this case $r_1$ must cross $b_0$ once more to the left of $I(b_1,r_1)$, since it also touches $b_2$ and therefore must lie above it (see Figure~\ref{fig:b_0-b_2-b}).

Consider now the case that $r_1$ touches $b_1$ at a point in $b_1(b_0,+)$.
Then $r_1$ must intersect $b_0(b_2,b_1)$ since this is the only part of $b_0$ which lies above the upper envelope of $b_1$ and $b_2$.
Since $r_1$ may not cross $b_2$ or intersect $b_0$ twice, it follows that $r_1$ must touch $b_0$ (see Figure~\ref{fig:b_0-b_2-c}).
Note that $r_0$ also touches $b_1$ at $b_1(b_0,+)$.
If $I(b_1,r_0)$ precedes $I(b_1,r_1)$ on $b_1$, then $r_0(b_1,+)$ must intersect $r_1$ by Proposition~\ref{prop:red-cross-between} (see Figure~\ref{fig:b_0-b_2-c}).
However, then $r_0$ must intersect $r_1$ once more or cross $b_0$ which is impossible.
If, on the other hand, $I(b_1,r_1)$ precedes $I(b_1,r_0)$ on $b_1$, then $r_1(b_1,+)$ must intersect $r_0$ by Proposition~\ref{prop:red-cross-between}.
However, then $r_0$ cannot touch $b_0$ to the right of $\ell$, see Figure~\ref{fig:b_0-b_2-d}.
\end{proof}

Suppose that $G$ contains a cycle and let $C=b_0-r_0-b_1-r_1-\ldots-b_k-r_k-b_0$ be a shortest cycle in $G$,
such that $b_i$ corresponds to a blue curve and $r_i$ corresponds to a red curve, for every $i=0,1,\ldots,k$.
We may assume without loss of generality that $b_1$ has the lowest intersection point with $\ell$ among the blue curves in $C$ and that $I(b_0,\ell) <_y I(b_2,\ell)$.

\begin{prop}\label{prop:structure}
For every $i \ge 1$ the curve $r_i$ intersects $\ell$ above $r_0$ and intersects $b_0(-,\ell)$, $r_0(b_0,+)$ and $b_1(b_0,+)$.
See Figure~\ref{fig:path-prop} for an illustration.
\begin{figure}[t]
	\centering
	\subfloat[]{\includegraphics[width= 7cm]{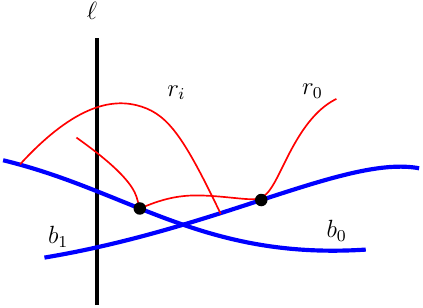}\label{fig:path-prop1}}
	\hspace{5mm}
	\subfloat[]{\includegraphics[width= 7cm]{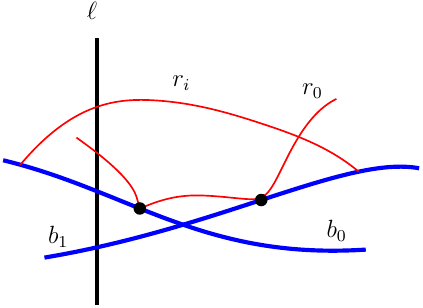}\label{fig:path-prop2}}
	\caption{Illustrations for the statement of Proposition~\ref{prop:structure}: $r_i$ intersects $\ell$ above $r_0$ and intersects $b_0(-,\ell)$, $r_0(b_0,+)$ and $b_1(b_0,+)$.}
	\label{fig:path-prop}		
\end{figure}
\end{prop}

\begin{proof}
We prove the claim by induction.
Consider the case $i=1$.
Before showing that $r_1$ satisfies the claim, we first look at $b_2$.
It intersects $\ell$ above $b_0$ and crosses $b_1$ to the right of $\ell$ by Propositions~\ref{prop:blue-cross} and~\ref{prop:blue-cross-between}.
It follows from Proposition~\ref{prop:b_0-b_2} that $b_0$ and $b_2$ intersect to the left of $\ell$ (refer to Figure~\ref{fig:path-prop-b2r1-a}).
\begin{figure}
	\centering
	\subfloat[If $I(b_1,r_1) <_x I(b_1,r_0)$ then $r_0$ cannot touch $b_0$.]{\includegraphics[width= 7cm]{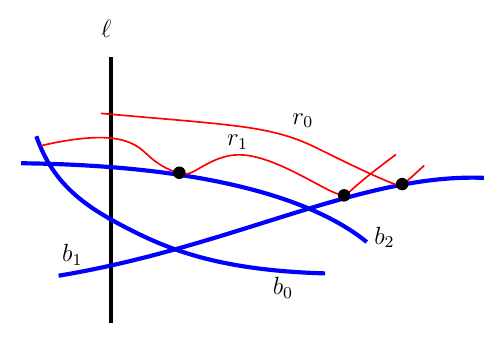}\label{fig:path-prop-b2r1-a}}
	\hspace{5mm}
	\subfloat[$I(b_1,r_0) <_x I(b_1,r_1)$. $r_1$ satisfies the properties of Proposition~\ref{prop:structure}]{\includegraphics[width= 7cm]{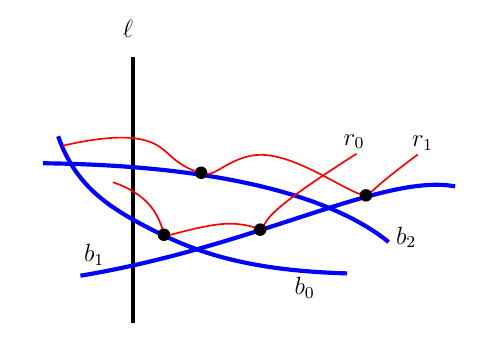}\label{fig:path-proof-b2r1-b}}
	\caption{Illustrations for the proof of Proposition~\ref{prop:structure}: the induction base.}
	\label{fig:path-proof-b2r1}		
\end{figure}
On the other hand the intersection of $b_1$ and $b_2$ is between
their touchings with $r_1$, hence to the right of $\ell$. Therefore, $b_2$ intersects $b_1(b_0,+)$.
Since $r_1$ is above $b_2$ it must also intersect $b_0$ to the left of $\ell$ and intersect $b_1(b_0,+)$.
It remains to show that $r_1$ intersects $\ell$ above $r_0$ and intersects $r_0(b_0,+)$.

Consider $r_0$ and note that it must touch $b_0(\ell,b_1)$ and (further to the right) touch $b_1(b_0,+)$. 
If $I(b_1,r_1) <_x I(b_1,r_0)$ then $I(r_0,r_1)$ is between these points by Proposition~\ref{prop:red-cross-between} and it follows that $r_0(\ell,r_1)$ is above $r_1(\ell,r_0)$ and hence $r_0$ cannot touch $b_0$ (see Figure~\ref{fig:path-prop-b2r1-a}).
Therefore, $I(b_1,r_0) <_x I(r_0,r_1) <_x I(b_1,r_1)$ which implies that $I(r_0,\ell) <_y I(r_1,\ell)$.
Since $r_0$ touches $b_0$ before touching $b_1$, this also implies that $r_1$ intersects $r_0(b_0,+)$.
Therefore, $r_1$ satisfies the properties above, see Figure~\ref{fig:path-proof-b2r1-b}
(note that $r_1$ cannot intersect $b_1$ to the left of $I(b_1,r_0)$ as in Figure~\ref{fig:path-prop}(a)).

\medskip
Suppose now that the claim holds for $r_i$, $i \ge 1$.
Observe that $b_{i+1}$ intersects $\ell$ above $b_1$ (as all the blue curves in $C$) and recall that $r_i$ and $r_{i+1}$ touch $b_{i+1}$ at Type~2 tangency points.
We consider two cases based on whether $b_{i+1}$ intersects $\ell$ above or below $b_0$.

\noindent\paragraph{Case 1:} $b_{i+1}$ intersects $\ell$ above $b_0$.
Since $L(b_{i+1}) <_x L(r_i) <_x R(r_i) <_x R(b_{i+1})$ it follows that $b_{i+1}$ must cross $b_0(r_i,\ell)$ and $b_1(b_0,r_i)$, see Figure~\ref{fig:cycle-proof6a}.
\begin{figure}[t]
	\centering
	\includegraphics[width= 8cm]{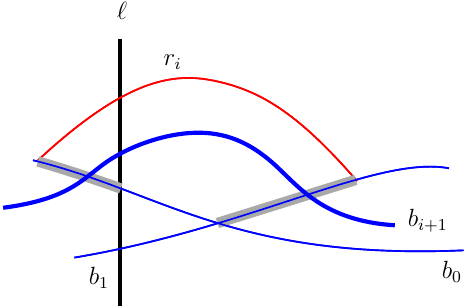}
	\caption{Proposition~\ref{prop:structure} Case 1: $I(b_0,\ell) <_y I(b_{i+1},\ell) <_y I(r_i,\ell)$. Since $L(b_{i+1}) <_x L(r_i) <_x R(r_i) <_x R(b_{i+1})$ it follows that $b_{i+1}$ must cross $b_0(r_i,\ell)$ and $b_1(b_0,r_i)$.}
	\label{fig:cycle-proof6a}		
\end{figure}
Since $r_{i+1}$ lies above $b_{i+1}$ it follows that, as $b_{i+1}$, it intersects $b_0$ to the left of $\ell$ and $b_1(b_0,+)$.
It remains to show that $I(r_0,\ell)  <_y I(r_{i+1},\ell)$ and that $r_{i+1}$ intersects $r_0(b_0,+)$.
We proceed by considering two subcases.

\medskip\noindent\underline{Case 1a}: $I(r_0,\ell) <_y I(b_{i+1},\ell)$.
Since $I(b_{i+1},\ell) <_y I(r_{i+1},\ell)$ it follows that $I(r_0,\ell) <_y I(r_{i+1},\ell)$.
Furthermore, $b_{i+1}$ must intersect $r_0(b_0,+)$ since $R(r_i) <_x R(b_{i+1})$ and by the induction hypothesis $r_0(b_0,+)$ intersects $r_i$.
Therefore $r_{i+1}$ intersects $r_0(b_0,+)$ as well (see Figure~\ref{fig:cycle-proof5}). 
\begin{figure}
	\centering
	\subfloat[]{\includegraphics[width= 7cm]{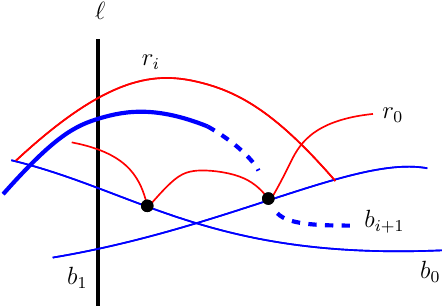}\label{fig:path-proof5b}}
	\hspace{5mm}
	\subfloat[]{\includegraphics[width= 7cm]{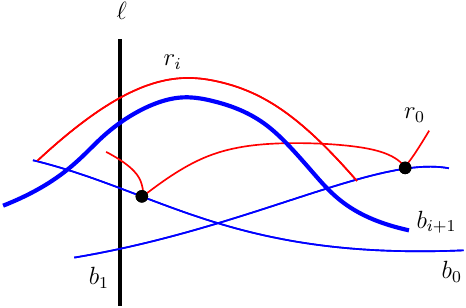}\label{fig:path-proof5c}}
	\caption{$I(r_0,\ell) <_y I(b_{i+1},\ell)$. Since $b_{i+1}$ has the desired properties so does $r_{i+1}$}
	\label{fig:cycle-proof5}		
\end{figure}

\medskip\noindent\underline{Case 1b}: $I(b_{i+1},\ell) <_y I(r_0,\ell)$.
Observe that $r_0$ touches $b_0$ at $b_0(\ell,b_1)$.
We have noted that $b_{i+1}$ intersects $b_0(r_i,l)$, hence $b_{i+1}$ must intersect $r_0(\ell,b_0)$, see Figure~\ref{fig:cycle-proof6}.
\begin{figure}[t]
	\centering
	\includegraphics[width= 8cm]{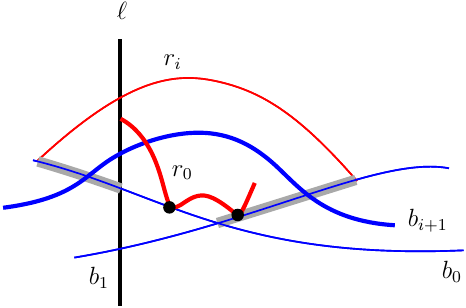}
	\caption{If $I(b_{i+1},\ell) <_y I(r_0,\ell)$ then $r_i$ does not intersect $r_0(b_0,+)$.}
	\label{fig:cycle-proof6}		
\end{figure}
Therefore $b_{i+1}(r_0,+)$ does not intersect $r_0(b_0,+)$.
Since $r_i$ lies above $b_{i+1}$ it follows that $r_i$ does not intersect $r_0(b_0,+)$ which is a contradiction.

\noindent\paragraph{Case 2:} $b_{i+1}$ intersects $\ell$ below $b_0$. Recall that $I(b_1,\ell)$ is the lowest among the intersection points of blue curves with $\ell$.
Since $I(b_0,b_1) <_x I(b_1,r_i) <_x R(b_{i+1})$ the curve $b_{i+1}$ must cross either $b_0(\ell,b_1)$ or $b_1(\ell,b_0)$.
In the latter case $b_{i+1}$ cannot intersect $b_0(b_1,+)$ by Proposition~\ref{prop:b-above}, therefore it must intersect  $b_0(-,\ell)$. 
But then $L(r_i) <_x L(b_{i+1})$ (see Figure~\ref{fig:cycle-proof4})
\begin{figure}[t]
	\centering
	\subfloat[If $b_{i+1}$ intersects $b_1(\ell,b_0)$ then $L(r_i) <_x L(b_{i+1})$.]{\includegraphics[width= 7cm]{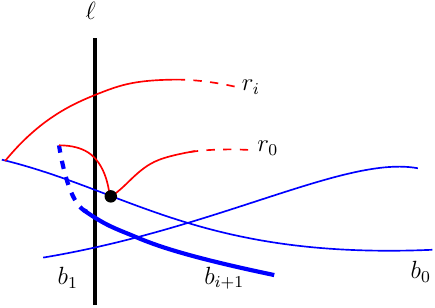}\label{fig:cycle-proof4}}
	\hspace{5mm}
	\subfloat[$b_{i+1}$ intersects $b_0(\ell,b_1)$. If $b_{i+1}$ crosses $r_0$, then it must cross it twice.]{\includegraphics[width= 7cm]{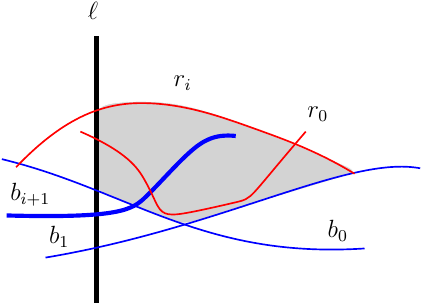}\label{fig:cycle-proof10}}
	\caption{$I(b_1,\ell) <_y I(b_{i+1},\ell) <_y I(b_0,\ell)$.}
	\label{fig:cycle-proof7}		
\end{figure}
Therefore, $b_{i+1}$ intersects $b_0(\ell,b_1)$.

Consider the region bounded by $\ell(r_i,b_0)$, $b_0(\ell,b_1)$, $b_1(b_0,r_i)$ and $r_i(\ell,b_1)$, see Figure~\ref{fig:cycle-proof10}.
Then $b_{i+1}$ enters this region at $b_0(\ell,b_1)$ and leaves it at $b_1(b_0,r_i)$.
Note that $b_{i+1}$ must intersect $r_0$ at this region since only within this region $b_{i+1}$ has a part above the upper envelope of $b_0$ and $b_1$ (where $r_0$ lies).
Furthermore, $b_{i+1}$ must touch $r_0$, for otherwise it must cross $r_0$ twice (see Figure~\ref{fig:cycle-proof10}).

It follows that $b_{i+1}$ crosses $b_0(r_0,b_1)$, then touches $r_0$ and then crosses $b_1(b_0,r_i)$.
One property that we wish to show is that $I(r_0,\ell) <_y I(r_{i+1},\ell)$.
Suppose that $I(r_{i+1},\ell) <_y I(r_0,\ell)$.
Since $r_{i+1}$ lies above $b_{i+1}$ it may intersect $b_1$ only at $b_1(b_{i+1},+)$.
It follows that $r_{i+1}$ crosses $r_0(\ell,b_{i+1})$ and intersects $b_1(r_0,+)$, see Figure~\ref{fig:cycle-proof8e}.
\begin{figure}[t]
	\centering
	\subfloat[If $I(r_{i+1},\ell) <_y I(r_0,\ell)$ then $(b_{i+1},r_0)$ is an edge in $G$ that creates a shorter cycle than $C$.]{\includegraphics[width= 7cm]{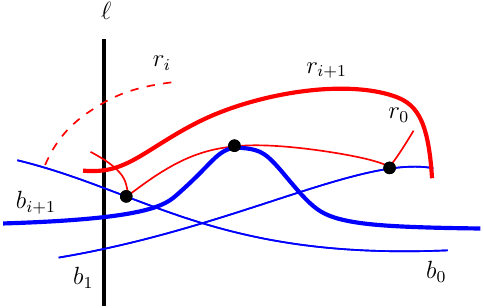}\label{fig:cycle-proof8e}}
	\hspace{5mm}
	\subfloat[If $I(r_{0},\ell) <_y I(r_{i+1},\ell)$ and $r_{i+1}$ intersects $r_0(-,b_{i+1})$ then $(b_{i+1},r_0)$ is an edge in $G$ that creates a shorter cycle than $C$.]{\includegraphics[width= 7cm]{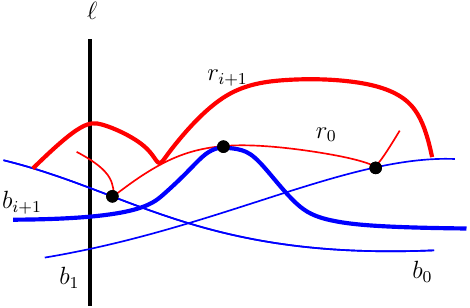}\label{fig:cycle-proof8g}}
	\hspace{5mm}
	\subfloat[If $I(r_{0},\ell) <_y I(r_{i+1},\ell)$ then $r_i$ has the desired properties.]{\includegraphics[width= 7cm]{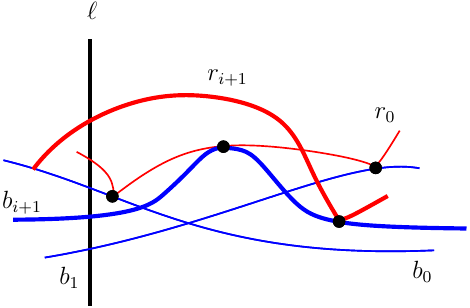}\label{fig:cycle-proof8f}}
	\caption{Concluding cases in the proof of Proposition~\ref{prop:structure}.}
	\label{fig:cycle-proof9}		
\end{figure}
However, this implies that $R(r_0) <_x R(r_{i+1}) <_x R(b_{i+1})$.
Furthermore, by induction $r_i$ intersects $r_0(b_0,+)$ and so it cannot intersect $r_0(-,b_0)$ and so $L(b_{i+1}) <_x L(r_i) <_x L(r_0)$, thus $I(b_{i+1},r_0)$ is a Type~2 tangency point.
Since $I(b_{i+1},r_0) <_x I(b_1,r_0)$, it also holds that $I(b_{i+1},r_0)$ is not the rightmost tangency point on $r_0$ and therefore $(b_{i+1},r_0)$ is an edge in $G$. But then $r_0-b_1-r_1-\ldots-b_{i+1}-r_0$ is a shorter cycle than $C$.

Thus $I(r_{0},\ell) <_y I(r_{i+1},\ell)$.
If $r_{i+1}$ intersects $r_0(-,b_{i+1})$ then it must touch it for otherwise $r_{i+1}$ cannot intersect $b_1(b_{i+1},+)$ (the only part of $b_1$ that lies above $b_{i+1}$ and may intersect $r_{i+1}$), see Figure~\ref{fig:cycle-proof8g}.
However, then $R(r_0) <_x R(r_{i+1}) <_x R(b_{i+1})$ which implies as before that $I(b_{i+1},r_0)$ is an unmarked Type~2 tangency point and there is a shorter cycle than $C$.
Therefore $r_{i+1}$ intersects $r_0(b_{i+1},+)$ (and hence, $r_0(b_0,+)$), $b_0(-,\ell)$ and $b_1(b_0,+)$, as desired (see Figure~\ref{fig:cycle-proof8f}).
\end{proof}

We now return to the proof of Lemma~\ref{lem:nested} and consider the cycle $C$.
It follows from Proposition~\ref{prop:structure} that $r_k$ intersects $b_0$ to the left of $\ell$ and therefore $(b_0,r_k)$ cannot be an edge in $G$.
Thus $G$ is a forest and has at most $n-1$ edges.
This implies that there are at most $2n-1$ Type~2 tangency points to the right of $\ell$ and at most $8n-4$ tangency points of Types~1 and~2.
\end{proof}

\subsection{Bounding touching pairs of Type~3 or~4}

We first describe the main idea before going into details:
By symmetry it is enough to consider just tangency points of Type~4 which are to the right of a vertical line that intersects all the curves in $\cC$.
We will prove that there are linearly many such tangencies by showing that after some additional pruning we can order the edges of the tangencies graph such that there is no monotone increasing path of length $7$.
A linear bound on the size of this graph then follows by the next result about edge-ordered graphs, attributed to R\"odl~\cite{rodl} in~\cite{GERBNER202366}.
For completeness and since the latter reference is not easily accessible we reprove it. 

\begin{lem}~\cite{rodl}\label{lem:k-path}
	Let $G=(V,E)$ be an $n$-vertex graph and let $<$ be a total order of its edges.
	Let $k$ be an integer such that $G$ has at least ${k+1\choose 2}n$ edges. 
	Then $G$ contains a monotone increasing path of $k$ edges, that is, a path $e_1-e_2-\ldots-e_k$ such that $e_1 < e_2 < \ldots < e_k$.
\end{lem}

\begin{proof}
We prove by double induction.
For any $n$ and $k=1$ the claim trivially holds as well as for every $k$ and $n=1$.
For the induction step, we assume that the claim holds for $n$ and $k-1$ and also for $k$ and $n-1$ and consider an $n$-vertex graph with at least ${k+1\choose 2}n$ totally ordered edges.
If the graph contains a vertex of degree smaller than $k$, then we remove this vertex and obtain a graph with $n-1$ vertices and at least ${k+1\choose 2}n-(k-1) \ge {k+1\choose 2}(n-1)$ edges.
Thus, by the induction hypothesis a monotone increasing path of length $k$ exists.
Otherwise, if the degree of every vertex is at least $k$, then we remove for every vertex the $k$ highest incident edges (with respect to $<$).
There are at least ${k+1\choose 2}n-nk \ge {k\choose 2}n$ remaining edges, therefore by the induction hypothesis the graph that we get has a monotone increasing path of $k-1$ edges and $k$ vertices. 
Denote this path by $p$ and let $v$ be the last vertex on $p$.
Then at least one of the $k$ removed edges at $v$ has an endpoint which is not on $p$. By adding this edge to $p$ we get a monotone increasing path of $k$ edges.
\end{proof}

\begin{lem}\label{lem:non-nested}
There are at most $1152n$ tangency points of Type~3 or~4.
\end{lem}

\begin{proof}
Since all the curves in $\cC$ are pairwise intersecting and $x$-monotone there is a vertical line $\ell$ that intersects all of them.
By slightly shifting $\ell$ if needed we may assume that no two curves intersect $\ell$ at the same point.
We assume without loss of generality that at least half of all the tangency points of Type~3 or~4 are to the right of $\ell$, for otherwise we may reflect all the curves about $\ell$.
We may further assume that at least half of the tangency points of Type~3 or~4 to the right of $\ell$ are of Type~4, for otherwise we may reflect all the curves about the $x$-axis.
Henceforth, we consider only Type~4 tangency points to the right of $\ell$.

By Proposition~\ref{prop:poset} a curve cannot touch one curve from above and another curve from below at Type~4 tangency points. Thus, we may partition the curves into \emph{blue} curves and \emph{red} curves such that at every tangency point a blue curve touches a red curve from below (we ignore curves that contain no tangency points among the ones that we consider).

Clearly, there are no Type~4 tangencies among the blue curves, however, there might be tangencies of other types among them.
Next we wish to obtain a subset of the blue curves such that every pair of them are crossing and they together contain a percentage of the tangency points that we consider.
It follows from Proposition~\ref{prop:poset} that the largest chain in the partially ordered set of the blue curves with respect to $\prec_1$ is of length two. Therefore, by Mirsky's Theorem (the dual of Dilworth's Theorem) the blue curves can be partitioned into two antichains with respect to $\prec_1$.
The blue curves of one of these antichains contain at least half of the tangency points that we consider.
By continuing with this set of blue curves and applying the same argument twice more with respect to $\prec_2$ and $\prec_3$ we obtain a set of pairwise crossing blue curves that together contain at least $1/8$ of the tangency points of Type~4 to the right of $\ell$.
Henceforth we consider these blue curves and the red curves that touch at least one of them at a Type~4 tangency point to the right of $\ell$.

Let $G=(B\cup R,E)$ be the (bipartite) \emph{tangencies graph} of these blue and red curves.
That is, $B$ corresponds to the blue curves, $R$ corresponds to the red curves and $E$ corresponds to pairs of touching blue and red curves (at Type~4 tangency points to the right of $\ell$).
We order the edges of $G$ according to the order of their corresponding tangency points from left to right.

The lemma follows from Lemma~\ref{lem:k-path} and the next claim.

\begin{prop}\label{prop:no-7-path}
$G$ does not contain a monotone increasing path of $7$ edges starting at $B$.
\end{prop}

\begin{proof}
Suppose that $G$ contains a monotone increasing path $b_1-r_1-\ldots-b_4-r_4$, such that $b_i \in B$ and $r_i \in R$, for $i=1,2,3,4$.
Since all the curves intersect $\ell$ and $R(b_i) <_x R(r_{i-1})$, we have:

\begin{obs}\label{obs:reds-cross-non-nested}
For $i=1,2,3$ we have that $I(b_{i+1},r_i) <_x I(r_i,r_{i+1}) < I(b_{i+1},r_{i+1})$ and $r_{i+1}(-,r_i)$ lies above $r_i(-,r_{i+1})$ (see Figure~\ref{fig:increasing-path}).
\begin{figure}[t]
	\centering
	\includegraphics[width= 12cm]{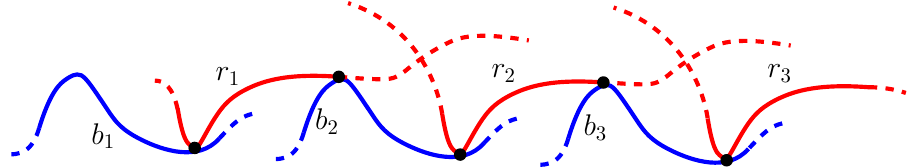}
	\caption{Since the curves intersect $\ell$ and $R(b_i) <_x R(r_{i-1})$, it follows that $I(b_2,r_1) <_x I(r_1,r_2) < I(b_2,r_2)$ and $I(b_3,r_2) <_x I(r_2,r_3) < I(b_3,r_3)$.}
	\label{fig:increasing-path}		
\end{figure}
\end{obs}

Considering consecutive blue curves in the path, observe that $I(b_i,b_{i+1})$ cannot be to the right of $I(b_{i+1},r_i)$, since in such a case $b_{i+1}$ must intersect $b_i$ or $r_i$ twice to be able to intersect $\ell$, see Figure~\ref{fig:b1r1b2-b}.
\begin{figure}
	\centering
	\includegraphics[width=7cm]{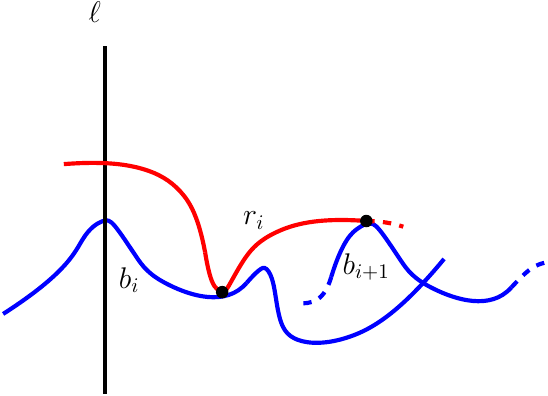}
	\caption{$I(b_i,b_{i+1})$ cannot be to the right of $I(b_{i+1},r_i)$.}
	\label{fig:b1r1b2-b}		
\end{figure}

\begin{obs}\label{obs:blues-cross-non-nested}
For $i=1,2,3$ we have that $I(b_i,b_{i+1}) <_x I(b_{i+1},r_i)$.
\end{obs}

Thus, $I(b_1,b_2)$ is to the left of $I(b_2,r_1)$. We consider two cases based on its location with respect to $I(b_1,r_1)$.

\noindent\paragraph{Case 1:} $I(b_1,b_2) <_x I(b_1,r_1)$.
This implies that $R(b_1) <_x I(b_2,r_1)$. 
Since $I(b_2,r_1) <_x I(r_1,r_2)$ and $r_2(-,r_1)$ lies above $r_1(-,r_2)$ which lies above $b_1$, it follows that $r_2$ and $b_1$ may intersect only to the left of $L(r_1)$. 
We must also have $I(b_1,b_2) <_x I(b_1,r_2)$ since $L(b_2)<L(r_2)$ and $r_2$ lies above $b_2$ which must be above $b_1$ to the left of $I(b_1,b_2)$ (see Figure~\ref{fig:b1r1b2r2b3-b}). 
\begin{figure}[t]
	\centering
	\subfloat[If $b_3$ intersects $b_2(r_2,+)$ then it cannot intersect $r_1$.]{\includegraphics[width=6cm]{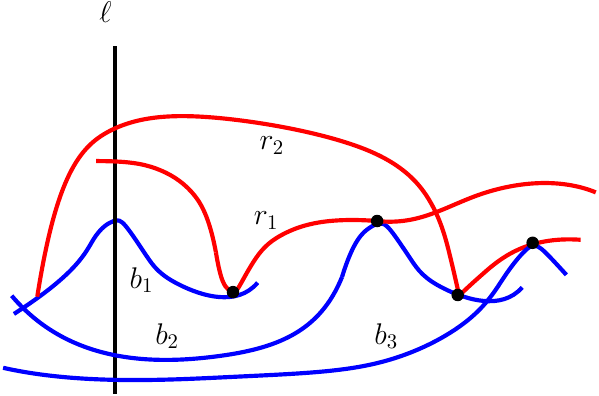}\label{fig:b1r1b2r2b3-b}}
	\hspace{5mm}
	\subfloat[If $b_3$ intersects $b_2(-,r_2)$ then $b_2$ and $r_3$ do not intersect.]{\includegraphics[width=7.5cm]{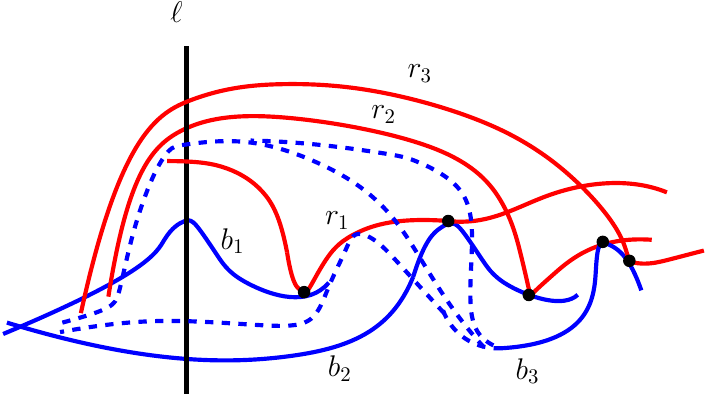}\label{fig:b1r1b2r2b3-c}}
	\caption{Case 1: If $I(b_1,b_2) <_x I(b_1,r_1)$ then $I(b_1,b_2) <_x I(b_1,r_2) <_x L(r_1)$. }
	\label{fig:b1r1b2r2b3}		
\end{figure}

Considering $b_3$, we observe that it cannot intersect $b_2(r_2,+)$ since then it 
does not intersect $r_1$.
Indeed, suppose that $b_3$ intersects $b_2(r_2,+)$ and refer to Figure~\ref{fig:b1r1b2r2b3-b}. $b_3$ lies below $r_2$ and $R(b_3) <_x R(r_2)$, therefore $b_3$ may not intersect $r_1(r_2,+)$ which lies above $r_2$.  
Since $I(r_1,r_2) <_x I(b_2,r_2) <_x I(b_2,b_3)$ it follows that $b_3(b_2,+)$ cannot intersect $r_1$.
The other part of $b_3$, $b_3(-,b_2)$, lies below $b_2$ which lies below $r_1$ and has its left endpoint to the left of $L(r_1)$. Therefore $b_3(-,b_2)$ cannot intersect $r_1$ as well. 

Therefore, $b_3$ crosses $b_2(-,r_2)$.
This implies that $R(b_2) <_x I(b_3,r_2)$, see Figure~\ref{fig:b1r1b2r2b3-c}.
We claim that $b_3$ and $r_2$ `block' $r_3$ from intersecting $b_2$.
Indeed, since $R(b_2) <_x I(b_3,r_2) <_x I(r_2,r_3)$ and $r_3(-,r_2)$ lies above $r_2(-,r_3)$ which lies above $b_2$ it follows that $r_3$ may intersect $b_2$ only to the left of $L(r_2)$.
However, $r_2(r_1,+)$ `blocks' $b_3$ from intersecting $r_1$ to the right of $I(r_1,r_2)$, therefore $b_3$ must intersect $r_1(-,r_2)$, which implies that $b_3$ must cross $b_2$ to the right of $L(r_1)$ which is to the right of $L(r_2)$, see Figure~\ref{fig:b1r1b2r2b3-c}.
But then $b_3$ is above $b_2$ to the left of $L(r_2)$, and since $L(b_3) <_x L(r_2)$ and $L(b_3) <_x L(r_3)$ it follows that $b_3$ `blocks' $b_2$ from intersecting $r_3$ to the left of $L(r_2)$.
Therefore, $b_2$ and $r_3$ do not intersect.

This concludes Case~1. Note that we have not used the existence of $b_4$ and $r_4$, that is, we only considered the path $b_1-r_1-b_2-r_2-b_3-r_3$ in $G$.

\noindent\paragraph{Case 2:} $I(b_1,r_1) <_x I(b_1,b_2) <_x I(b_2,r_1)$.
We claim that $I(b_2,b_3) <_x I(b_2,r_2)$. 
Indeed, suppose that $I(b_2,r_2) <_x I(b_2,b_3)$ and refer to Figure~\ref{fig:b1r1b2r2b3-e}.
\begin{figure}
	\centering
	\includegraphics[width=7cm]{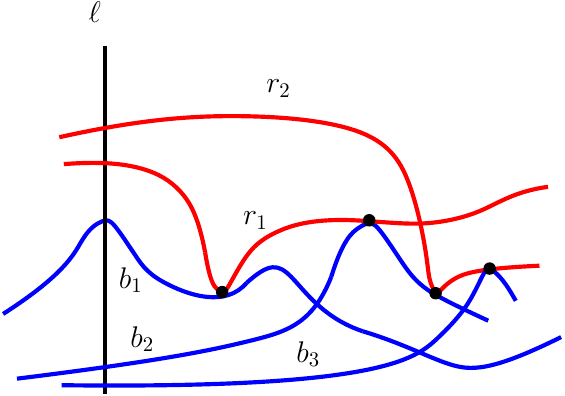}
	\caption{Case 2:  $I(b_1,r_1) <_x I(b_1,b_2) <_x I(b_2,r_1)$. If $I(b_2,r_2) <_x I(b_2,b_3)$ then $b_3$ and $r_1$ do not intersect.}
	\label{fig:b1r1b2r2b3-e}		
\end{figure}
Then $b_3(-,b_2)$ must lie below $b_2(-,b_3)$.
Since $b_2$ lies below $r_1$ and $L(b_2) <_x L(r_1)$ it follows that $b_3(-,b_2)$ cannot intersect $r_1$.
Considering the other part of $b_3$, $b_3(b_2,+)$, it lies below $r_2$ and its right endpoint is to the left of $R(r_2)$.
Since $r_2$ is below $r_1$ to the right of $I(r_1,r_2)$ and $I(r_1,r_2) <_x I(b_2,r_2) <_x I(b_2,b_3)$, it follows that $b_3(b_2,+)$ cannot intersect $r_1$.

Therefore $I(b_2,b_3) <_x I(b_2,r_2)$. 
However, then we are in Case~1 for the path $b_2-r_2-b_3-r_3-b_4-r_4$, which is impossible.
\end{proof}

Returning to the proof of Lemma~\ref{lem:non-nested} by Proposition~\ref{prop:no-7-path} $G$ has no monotone increasing path of length $8$ (starting with an edge of any color) and then by Lemma~\ref{lem:k-path} $G$ has less than $36n$ edges.
This in turn implies that there are at most $8 \cdot 2 \cdot 2 \cdot 36n = 1152n$ tangency points of Types~3 and~4 together (in fact at most $1152n-32$).
\end{proof}

By Lemmata~\ref{lem:nested} and~\ref{lem:non-nested} there are at most $1160n-4$ tangency points among the curves in $\cC$. This concludes the proof of Theorem~\ref{thm:pw-x-monotone-1-intersecting}.

\section{Discussion}
\label{sec:Discussion}

We have shown that $n$ $x$-monotone pairwise intersecting $1$-intersecting curves determine $O(n)$ tangencies. The constant hiding in the big-$O$ notation is rather large, since, for simplicity, we did not make much of an effort to get a smaller constant. In particular, our upper bound can be improved by considering more cases. For example, in the proof of Lemma~\ref{lem:non-nested} we may consider tangencies among blue curves and avoid using the dual of Dilworth's Theorem. It is also enough to forbid a monotone increasing path of $5$ edges in Proposition~\ref{prop:no-7-path}, again by considering more cases.
It would be interesting to determine the exact maximum number of tangencies among a set of $n$ $x$-monotone curves each two of which intersect at exactly one point.
The best lower bound we have is $\lfloor 3.5n \rfloor -8$, see Figure~\ref{fig:lbconstr}.
\begin{figure}
	\centering
	\includegraphics[width= 14cm]{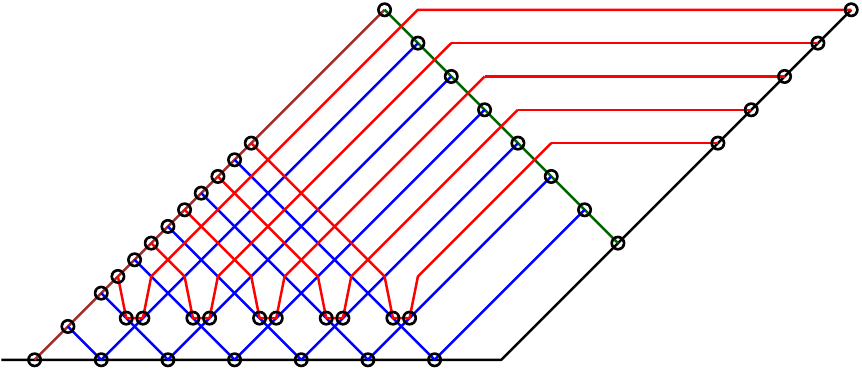}
	\caption{$n$ $x$-monotone pairwise intersecting $1$-intersecting curves can determine $\lfloor 3.5n\rfloor -8$ tangencies (one can slightly elongate the curves in a way that there is no tangency at an endpoint).}
	\label{fig:lbconstr}		
\end{figure}

Suppose that we allow more than two curves to intersect at a single point but count the number of \emph{tangency points} rather than the number of tangent pairs of curves. Is it still true that there are linearly many tangency points? For $n$ $1$-intersecting curves which are not necessarily $x$-monotone one can get as many as $\Omega(n^{4/3})$ tangency points via the construction of that many point-line incidences, see Figure~\ref{fig:many-tangency-points} for an illustration.
\begin{figure}[t]
	\centering
	\includegraphics[width= 12cm]{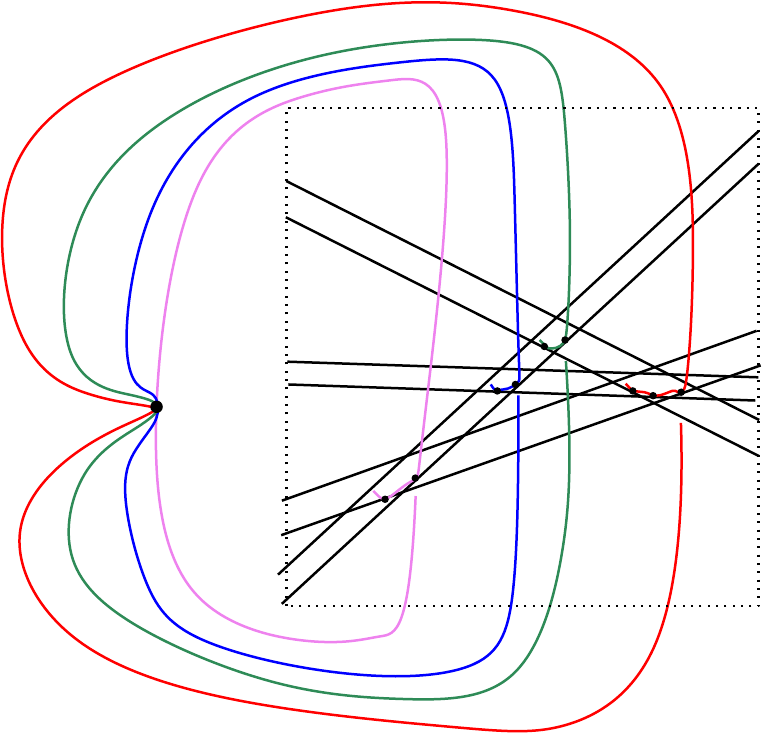}
	\caption{$n$ pairwise intersecting $1$-intersecting curves might determine $\Omega(n^{4/3})$ tangency points.}
	\label{fig:many-tangency-points}		
\end{figure}

\paragraph{Acknowledgments.} We thank anonymous reviewers for their helpful suggestions for improving the presentation of the paper. The lower-bound construction shown in Figure~\ref{fig:lbconstr} is due to one of them.

\footnotesize
\bibliographystyle{plainurl}
\bibliography{xmon}

\end{document}